\newtheorem{theorem}{Theorem}[section]
\newtheorem{proposition}[theorem]{Proposition}
\newtheorem*{claim*}{Claim}
\newtheorem{corollary}[theorem]{Corollary}
\newtheorem{Main Conjecture}[theorem]{Main Conjecture}
\newtheorem{conjecture}[theorem]{Conjecture}
\theoremstyle{remark}
\newtheorem{definition}[theorem]{Definition}
\newtheorem{example}[theorem]{Example}
\newtheorem{remark}[theorem]{Remark}
\theoremstyle{plain}
\newcommand{\excise}[1]{}
\newcommand{\cellsize}{12}
\newlength{\cellsz} \setlength{\cellsz}{\cellsize\unitlength}
\newsavebox{\cell}
\sbox{\cell}{\begin{picture}(\cellsize,\cellsize)
\put(0,0){\line(1,0){\cellsize}}
\put(0,0){\line(0,1){\cellsize}}
\put(\cellsize,0){\line(0,1){\cellsize}}
\put(0,\cellsize){\line(1,0){\cellsize}}
\end{picture}}
\newcommand\cellify[1]{\def\thearg{#1}\def\nothing{}%
\ifx\thearg\nothing
\vrule width0pt height\cellsz depth0pt\else
\hbox to 0pt{\usebox{\cell} \hss}\fi%
\vbox to \cellsz{
\vss
\hbox to \cellsz{\hss$#1$\hss}
\vss}}
\newcommand\tableau[1]{\vtop{\let\\\cr
\baselineskip -16000pt \lineskiplimit 16000pt \lineskip 0pt
\ialign{&\cellify{##}\cr#1\crcr}}}
\begin{document}
\pagestyle{plain}

\mbox{}
\title{Tropicalization, symmetric polynomials, and complexity}
\date{October 9, 2017}
\author[Alexander Woo]{Alexander Woo}
\address{Department of Mathematics, 
University of Idaho, Moscow, ID 83844}
\email{awoo@uidaho.edu}

\author{Alexander Yong}

\address{Department of Mathematics, University of Illinois at Urbana-Champaign, Urbana, IL 61801} 
\email{ayong@uiuc.edu}
\date{\today}

\maketitle

\begin{abstract}
D.~Grigoriev-G.~Koshevoy recently proved that tropical 
Schur polynomials have (at worst) polynomial tropical semiring complexity. They also conjectured tropical skew Schur polynomials have at least exponential complexity; we establish a polynomial complexity upper bound.
Our proof uses results about (stable) Schubert polynomials,
due to R.~P.~Stanley and S.~Billey-W.~Jockusch-R.~P.~Stanley, together with a sufficient condition for polynomial complexity that is connected to the saturated Newton polytope property.
\end{abstract}

\section{Introduction}

The {\bf tropicalization} of a polynomial 
\[f=\sum_{(i_1,i_2,\ldots,i_n)\in {\mathbb Z}_{\geq 0}^n} c_{i_1,\ldots,i_n}x_1^{i_1} x_2^{i_2}\cdots x_n^{i_n}
\in {\mathbb C}[x_1,x_2,\ldots,x_n]\]
(with respect to the trivial valuation ${\sf val}(a)=0$ for all $a\in {\mathbb C}^*$)
is defined to be 
\begin{equation}
\label{eqn:thetrop}
{\sf Trop}(f):=\max_{i_1,\ldots,i_n} \{i_1 x_1+ \cdots + i_n x_n
\}.
\end{equation}
This is a polynomial 
over the tropical semiring $({\mathbb R},\oplus, \odot)$, where
\[a\oplus b=\max(a,b) \text{\ and
$a\odot b=a+b$}\] 
respectively denote tropical addition and multiplication, respectively. We refer to the books \cite{Mikhal, MacSturm} for more about tropical mathematics.

Let ${\sf Sym}_n$ denote the ring of symmetric polynomials 
in $x_1,\ldots,x_n$.
A linear basis of ${\sf Sym}_n$ is given by the \emph{Schur
polynomials}. These polynomials are indexed by partitions $\lambda$
(identified with their Ferrers/Young diagrams). They are generating series over semistandard Young tableaux $T$ of shape
$\lambda$ with entries from $[n]:=\{1,2,\ldots,n\}$:
\[s_{\lambda}(x_1,\ldots,x_n):=\sum_{T} x^T
\text{\ \ \ \  where \ \ 
$x^T:=\prod_i x_i^{\text{$\#i$'s in $T$}}$.}\]
The importance of this basis stems from its applications to, for example, enumerative and algebraic combinatorics, the representation theory of symmetric groups and general linear groups, and Schubert calculus on Grassmannians; see, for example, \cite{Fulton,ECII}.

D.~Grigoriev and G.~Koshevoy \cite{Grigoriev} studied
the complexity of the tropical polynomial ${\sf Trop}(s_{\lambda})$ over
$({\mathbb R}, \oplus,\odot)$.  An \emph{arithmetic circuit} is
a circuit where inputs are each labelled by a single variable $x_i$ or a fixed constant,
each gate performs a single $\oplus$ or $\odot$ operation, and there is one
output.  An arithmetic circuit $C$ naturally gives an expression ${\sf res}(C)$, the
tropical polynomial in the variables $x_1,\ldots, x_n$ that it computes.  The
circuit $C$ \emph{evaluates} $f$ if ${\sf res}(C)=f$ as tropical polynomials,
meaning that one can show ${\sf res}(C)=f$ using the
\emph{tropical semiring axioms}, by which we mean 
the semiring axioms along with the idempotence property $a \oplus a = a$.  The \emph{tropical semiring
complexity} of $f$ is the smallest number of gates in a circuit $C$ evaluating
$f$; see \cite[Section~2]{Jerrum}.

The following is  \cite[Theorem~2.1]{Grigoriev}:

\begin{theorem}[D.~Grigoriev-G.~Koshevoy]\label{thm:first}
The tropical semiring complexity of 
${\sf Trop}(s_{\lambda})$ is at most $O(n^2\cdot \lambda_1)$.
\end{theorem}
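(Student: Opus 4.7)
The plan is to reduce ${\sf Trop}(s_\lambda)$ to a tropical-monomial-style combination of tropicalized elementary symmetric polynomials, each of which admits a small dynamic-programming circuit. The key step is the identity
$${\sf Trop}(s_\lambda)(x_1,\ldots,x_n) \;=\; \sum_{i=1}^n \lambda_i\, y_i,$$
where $y_1\geq\cdots\geq y_n$ denotes the decreasing rearrangement of $x_1,\ldots,x_n$. I would derive this directly from ${\sf Trop}(s_\lambda) = \max_T \sum_i m_i(T)\, x_i$, with $T$ ranging over semistandard tableaux of shape $\lambda$ with entries in $[n]$ and $m_i(T)$ the number of $i$-entries: Kostka's theorem says $(m_1(T),\ldots,m_n(T))$ runs over compositions whose decreasing rearrangement $\mu$ has at most $n$ parts and is dominated by $\lambda$. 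The rearrangement inequality pairs $\mu_i$ with $y_i$, and one Abel-summation step (using $y_i - y_{i+1}\geq 0$) forces the optimum at $\mu = \lambda$.

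A second Abel summation, with $\lambda_{n+1}:=0$, rewrites the identity as
$$\sum_{i=1}^n \lambda_i y_i \;=\; \sum_{i=1}^n (\lambda_i-\lambda_{i+1})\,(y_1+\cdots+y_i),$$
in which every coefficient $\lambda_i-\lambda_{i+1}$ is a non-negative integer and $y_1+\cdots+y_i$ equals ${\sf Trop}(e_i)(x_1,\ldots,x_n)$ (the sum of the $i$ largest coordinates). Since both sides agree as piecewise-linear functions on $\mathbb{R}^n$, they are equal as tropical polynomials, so
$${\sf Trop}(s_\lambda) \;=\; \bigodot_{i=1}^n \bigl[{\sf Trop}(e_i)\bigr]^{\odot(\lambda_i-\lambda_{i+1})}.$$

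For the ${\sf Trop}(e_i)$ themselves, I would use the Pascal-type recurrence
$${\sf Trop}(e_i)(x_1,\ldots,x_k) = {\sf Trop}(e_i)(x_1,\ldots,x_{k-1}) \;\oplus\; \bigl(x_k \odot {\sf Trop}(e_{i-1})(x_1,\ldots,x_{k-1})\bigr),$$
filling a two-dimensional DP table with $O(n^2)$ gates and reading off all of ${\sf Trop}(e_1),\ldots,{\sf Trop}(e_n)$ simultaneously. Assembling ${\sf Trop}(s_\lambda)$ via the displayed tropical product then costs only $\sum_i(\lambda_i-\lambda_{i+1})=\lambda_1$ further $\odot$ gates, for a total of $O(n^2+\lambda_1)$, well within the claimed $O(n^2\lambda_1)$.

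The main obstacle is the opening identity. It is equivalent to the statement that the Newton polytope of $s_\lambda$ is the permutohedron of $\lambda$ with each of its vertices realized in the support of $s_\lambda$ — the SNP-flavored input alluded to in the abstract. Once that combinatorial fact is in hand, the rest of the argument is routine bookkeeping.
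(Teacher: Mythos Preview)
Your approach is essentially the one the paper recalls (from Grigoriev--Koshevoy) inside the proof of Proposition~2.4: your factorization $\bigodot_{i} {\sf Trop}(e_i)^{\odot(\lambda_i-\lambda_{i+1})}$ is literally the same product as the paper's $\bigodot_{k=1}^{\lambda_1} {\sf Trop}(e_{\lambda'_k})$ (each $e_j$ occurs exactly $\lambda_j-\lambda_{j+1}$ times among the $e_{\lambda'_k}$), and both finish with the Pascal recurrence for $e_k$. Your bookkeeping, sharing one DP table across all $e_i$, even yields the slightly sharper count $O(n^2+\lambda_1)$.

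There is one genuine gap in your justification. You establish the factorization by showing the two sides agree as piecewise-linear \emph{functions} on $\mathbb{R}^n$ and then assert that ``they are equal as tropical polynomials.'' Under the paper's convention---equality provable from the semiring axioms together with idempotence, which amounts to equality of \emph{monomial supports}---that inference fails in general: for instance $x_1^{\odot 2}\oplus x_2^{\odot 2}$ and $x_1^{\odot 2}\oplus x_1\odot x_2\oplus x_2^{\odot 2}$ define the same function but are not equal in this sense. What is actually required is that the support of $s_\lambda$ (all lattice points of $\mathcal{P}_\lambda$, by SNP) equals the Minkowski sum of the supports of the $e_{\lambda'_k}$; this lattice-point identity is precisely the content of the paper's (2.2)--(2.4), and it does not follow from your rearrangement/Abel-summation argument alone. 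Your final paragraph gestures at SNP but places the difficulty at the functional identity ${\sf Trop}(s_\lambda)=\sum_i \lambda_i y_i$, which is in fact the easy step; the nontrivial input is the matching of supports, not of convex hulls.
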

The \emph{skew-Schur polynomial} 
$s_{\lambda/\mu}(x_1,\ldots,x_n)=\sum_{T} x^T$ 
is the generating series for semstandard tableau of skew shape 
$\lambda/\mu$ with entries from $[n]$.  
When $\mu=\emptyset$ then $s_{\lambda/\emptyset}=s_{\lambda}$; hence skew-Schur polynomials generalize 
Schur polynomials. Also,
\begin{equation}
\label{eqn:skewschurlr}
s_{\lambda/\mu}=\sum_{\nu}c_{\mu,\nu}^{\lambda} s_{\nu},
\end{equation}
where $c_{\lambda,\mu}^{\nu}\in {\mathbb Z}_{\geq 0}$ is the \emph{Littlewood-Richardson coefficient}. 
The next statement is from \cite[Section~5]{Grigoriev}:

\begin{conjecture}[D.~Grigoriev-G.~Koshevoy]\label{conj:main}
The tropical semiring complexity of  ${\sf Trop}(s_{\lambda/\mu})$ is at least exponential.
\end{conjecture}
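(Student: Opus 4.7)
The natural route to an exponential lower bound on tropical circuit complexity is to find some rigidity invariant of the polynomial. My first attempt would be to bound the complexity below by $\log_2$ of the number of monomials, since a circuit of size $s$ produces at most $2^s$ distinct monomials. But this fails immediately: straight-shape $s_\lambda$ already has exponentially many monomials yet, by Theorem~\ref{thm:first}, has only polynomial tropical complexity. A finer invariant is needed.

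A second plan would tropicalize equation~(\ref{eqn:skewschurlr}) to write ${\sf Trop}(s_{\lambda/\mu}) = \bigoplus_{\nu:\, c^\lambda_{\mu,\nu}>0} {\sf Trop}(s_\nu)$ and try to show that the max over Littlewood--Richardson summands cannot be evaluated succinctly. Concretely, I would look for a family $(\lambda^{(k)}, \mu^{(k)})$ whose Newton polytope has exponentially many vertices, each contributed by a \emph{distinct} $\nu$, and argue that any correct circuit must separately witness each such vertex via an internal gate (in the style of Jerrum--Snir-type lower bounds for monotone or tropical circuits). An alternative second plan would attempt a reduction from a tropical computation already known to be hard—e.g., the tropical permanent—by specializing some variables in $s_{\lambda/\mu}$.

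The hard part, and very likely the fatal obstacle, is that this rigidity may not exist. The symmetry that allows compression for $s_\lambda$—its full $S_n$-invariance—has a subtler analogue for skew shapes (jeu de taquin, promotion) and, under stabilization, connects to Schubert polynomials, which can still permit efficient recursive computation. The abstract signals that this is exactly what the authors exploit: via Stanley and Billey--Jockusch--Stanley, skew Schur polynomials are, up to a stable shift, Schubert polynomials, whose Newton polytopes enjoy the saturated Newton polytope property, and this saturation is leveraged to build polynomial-size tropical circuits. Thus my proposed strategies do not succeed, and any genuine proof of Conjecture~\ref{conj:main} would have to exploit the multiplicities $c^\lambda_{\mu,\nu}$ themselves, which are erased under tropicalization. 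On these grounds I doubt the conjecture is true in the stated form; a realistic ``proof proposal'' therefore reduces to identifying which specialization (if any) of $s_{\lambda/\mu}$ still resists the SNP-based compression, and I do not see a candidate.
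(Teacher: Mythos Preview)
Your assessment is correct: the conjecture is false, and the paper refutes it rather than proves it. The mechanism is essentially the one you anticipated. The paper realizes $s_{\lambda/\mu}$ as a Stanley symmetric polynomial $F_{w_{\lambda/\mu}}$ (via Billey--Jockusch--Stanley), and uses Stanley's result that the Schur expansion of any $F_w$ has a unique dominance-maximal term $\beta_{\max}(w)$. Combined with Schur-nonnegativity and Rado's theorem, this forces ${\sf Newton}(s_{\lambda/\mu}) = {\sf Newton}(s_\beta) = \mathcal{P}_\beta$ and SNP, hence ${\sf Trop}(s_{\lambda/\mu}) = {\sf Trop}(s_\beta)$ as tropical polynomials; Theorem~\ref{thm:first} then gives the $O(n^2\lambda_1)$ upper bound.

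Two small sharpenings of your sketch. First, the intermediary is the Stanley symmetric polynomial $F_w$, not a Schubert polynomial: the ``stabilization'' you allude to is precisely the passage from Schubert to Stanley, and it is $F_{w_{\lambda/\mu}}$ that equals $s_{\lambda/\mu}$ on the nose for a specific ($321$-avoiding) permutation. Second, your remark that tropicalization erases the multiplicities $c^\lambda_{\mu,\nu}$ is the crux, but the paper makes it quantitative: not only are multiplicities erased, every summand ${\sf Trop}(s_\nu)$ with $\nu \leq_D \beta$ is already absorbed into ${\sf Trop}(s_\beta)$ because $\mathcal{P}_\nu \subseteq \mathcal{P}_\beta$. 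The entire $\bigoplus_\nu$ collapses to a single term, so there is nothing left for a vertex-counting or Jerrum--Snir style lower bound to exploit, exactly as you suspected.
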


We will show the following:

\begin{theorem}\label{thm:mainrestate}
There is an explicitly described $\beta$, depending on $\lambda/\mu$, with 
$\beta_1=\lambda_1$, such that
\[{\sf Trop}(s_{\lambda/\mu}(x_1,\ldots,x_n))={\sf Trop}(s_{\beta}(x_1,\ldots,x_n))\]
over the tropical semiring $({\mathbb R},\oplus,\odot)$. 
\end{theorem}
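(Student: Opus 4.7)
The plan is to reduce the theorem to an equality of Newton polytopes. For any polynomial $f$ with nonnegative coefficients, ${\sf Trop}(f) = \max_{I \in \mathrm{supp}(f)} \langle I, x\rangle$ is the support function of $\mathrm{Newt}(f)$, and two such tropical polynomials agree in the semiring (using the axioms together with the idempotent relation $a \oplus a = a$) if and only if the support functions, equivalently the Newton polytopes, agree. So the theorem reduces to producing an explicit partition $\beta$ with $\mathrm{Newt}(s_{\lambda/\mu}) = \mathrm{Newt}(s_\beta) = P_\beta$, the permutahedron of $\beta$, and with $\beta_1 = \lambda_1$.

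I would take $\beta$ to be the partition whose conjugate is the sorted tuple of column heights of $\lambda/\mu$: explicitly, $\beta_k := \#\{j : \lambda'_j - \mu'_j \geq k\}$. Equivalently, $\beta$ is the weight of the \emph{column-layered} semistandard tableau $T_0$ of shape $\lambda/\mu$ in which each cell $(i,j)$ receives the label $i - \mu'_j$ (its depth in its column); one checks $T_0$ is semistandard, since columns strictly increase by construction and rows weakly increase because $\mu'_j \geq \mu'_{j+1}$. After the harmless reduction of deleting any column of $\lambda$ that lies entirely in $\mu$ (which does not alter $s_{\lambda/\mu}$), $\beta_1$ equals the number of nonempty columns of $\lambda/\mu$, which is $\lambda_1$.

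The central step is to show that $\beta$ dominates every weight in $\mathrm{supp}(s_{\lambda/\mu})$. For any SSYT $T$ of shape $\lambda/\mu$ and each $k \geq 1$, a column-by-column count gives $\#\{\text{entries} \leq k \text{ in } T\} \leq \sum_j \min(k, \lambda'_j - \mu'_j) = \beta_1 + \cdots + \beta_k$. Using that $s_{\lambda/\mu}$ is symmetric and hence $\mathrm{supp}(s_{\lambda/\mu})$ is $S_n$-invariant, Rado's criterion upgrades this to the statement that every weight of $s_{\lambda/\mu}$ is a point of $P_\beta$. The tableau $T_0$ witnesses $\beta \in \mathrm{supp}(s_{\lambda/\mu})$, so symmetry yields $S_n \cdot \beta \subseteq \mathrm{supp}(s_{\lambda/\mu})$, whence $P_\beta \subseteq \mathrm{Newt}(s_{\lambda/\mu})$. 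Combining, $\mathrm{Newt}(s_{\lambda/\mu}) = P_\beta = \mathrm{Newt}(s_\beta)$, and the first paragraph concludes.

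The main obstacle, and where the (stable) Schubert polynomial machinery of Stanley and Billey-Jockusch-Stanley cited in the abstract would enter, is a conceptual derivation of the explicit $\beta$ and of the dominance bound. Realizing $s_{\lambda/\mu}$ as the Stanley symmetric function $F_w$ for the $321$-avoiding permutation $w$ associated to $\lambda/\mu$, and applying the BJS compatible-sequence (or pipe-dream) expansion, should both produce $\beta$ as the content of a canonical lex-maximal reduced word and deliver the dominance bound uniformly. The saturated Newton polytope property, the second technical input referenced in the abstract, is what connects the polytopal statement proved here to the polynomial complexity bound of Theorem~\ref{thm:first}, allowing the circuit for $s_\beta$ to be transferred to $s_{\lambda/\mu}$.
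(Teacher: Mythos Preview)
Your argument is correct and arrives at exactly the same partition $\beta$ as the paper (the conjugate of the sorted column lengths of $\lambda/\mu$), but by a genuinely different and more elementary route. The paper realizes $s_{\lambda/\mu}$ as the Stanley symmetric function $F_{w_{\lambda/\mu}}$ via Billey--Jockusch--Stanley, and then imports Stanley's theorem that every $\nu$ appearing in $F_w$ satisfies $\nu\leq_D\beta_{\max}(w)$; the skew Schur case is thus a corollary of the general result for all Stanley symmetric functions (the paper's Theorem~3.2). You instead prove the dominance bound directly by the column count $\#\{\text{entries}\leq k\}\leq\sum_j\min(k,\lambda'_j-\mu'_j)$ and exhibit the witness tableau $T_0$ by hand. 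Your approach is self-contained and avoids the black boxes from \cite{Stanley84,BJS}; the paper's approach is less elementary but yields the stronger Theorem~3.2 along the way, which is of independent interest.

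Two small remarks. First, your opening reduction (``${\sf Trop}(f)={\sf Trop}(g)$ iff ${\sf Newton}(f)={\sf Newton}(g)$'') is correct for equality as functions on $\mathbb R^n$, but if one reads the equality in the strict axiomatic sense used for complexity, one needs identical supports, not just identical hulls. This is not a real gap: your dominance argument shows $s_{\lambda/\mu}$ is dominated by $s_\beta$ in the sense of Definition~\ref{def:dominated}, so Proposition~\ref{prop:xzy} gives SNP for both, hence identical supports. Second, your careful ``harmless reduction'' deleting empty columns is the right move; without it $\beta_1$ is only the number of nonempty columns of $\lambda/\mu$, which can be strictly less than $\lambda_1$ (e.g.\ $\lambda/\mu=(2)/(1)$), and the paper's own claim $\beta_1=\lambda_1$ tacitly assumes this normalization.
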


\begin{example}
Let $\lambda=(2,1)$ and $\mu=(1)$. Then the tableaux
contibuting to $s_{\lambda/\mu}$ are:
\[\tableau{&1\\1},\  \tableau{&1\\2},\ \tableau{&2\\1},
\ \tableau{&2\\2}.\]
Hence 
$s_{\lambda/\mu}=x_1^2+2x_1 x_2 +x_2^2$.
On the other hand, (\ref{eqn:skewschurlr}) in this case is:
\[s_{\lambda/\mu}=s_{1,1}+s_{2}=(x_1x_2)+(x_1x_2 +x_1^2+x_2^2).\]

By definition,
\begin{align*}
{\sf Trop}(s_{\lambda/\mu})
& =\max\{x_1+x_2, x_1+x_2, 2x_1, 2x_2\}\\
& = x_1\odot x_2 \oplus x_1\odot x_2 \oplus x_1^{\odot 2}
\oplus x_2^{\odot 2}\\
&= x_1\odot x_2 \oplus x_1^{\odot 2} 
\oplus x_2^{\odot 2} \text{ \ \ (idempotence)}\\
&={\sf Trop}(s_{2}),
\end{align*}
in agreement with Theorem~\ref{thm:mainrestate}. \qed
\end{example}

The following addresses 
Conjecture~\ref{conj:main}:

\begin{corollary}[of Theorems~\ref{thm:first} and~\ref{thm:mainrestate}]
\label{thm:main}
${\sf Trop}(s_{\lambda/\mu})$ has at most
$O(n^2\cdot \lambda_1)$ tropical semiring complexity.
\end{corollary}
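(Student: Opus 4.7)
The proof plan is essentially a one-line chaining of the two theorems already on the table: Theorem~\ref{thm:mainrestate} lets us replace the skew Schur polynomial by a straight Schur polynomial tropically, and Theorem~\ref{thm:first} then provides the circuit whose size is what we want.

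In more detail, I would first invoke Theorem~\ref{thm:mainrestate} to obtain a partition $\beta$, depending explicitly on $\lambda/\mu$ and satisfying $\beta_1 = \lambda_1$, for which
\[
{\sf Trop}(s_{\lambda/\mu}(x_1,\ldots,x_n)) \;=\; {\sf Trop}(s_{\beta}(x_1,\ldots,x_n))
\]
as elements of the tropical semiring $({\mathbb R},\oplus,\odot)$. Because this equality holds using only the tropical semiring axioms (including idempotence), any arithmetic circuit $C$ with ${\sf res}(C)={\sf Trop}(s_\beta)$ also evaluates ${\sf Trop}(s_{\lambda/\mu})$ in the sense of the definition from the introduction. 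Thus the tropical semiring complexity of ${\sf Trop}(s_{\lambda/\mu})$ is bounded above by that of ${\sf Trop}(s_\beta)$.

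Now I would apply Theorem~\ref{thm:first} to the straight Schur polynomial $s_\beta$ in $n$ variables: its tropical semiring complexity is at most $O(n^2 \cdot \beta_1)$. Since $\beta_1 = \lambda_1$, this gives the desired $O(n^2 \cdot \lambda_1)$ bound, completing the corollary.

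The only point requiring a moment's care is making sure Theorem~\ref{thm:first} applies in the correct variable count: Theorem~\ref{thm:mainrestate} outputs a partition $\beta$, and we need the Schur polynomial $s_\beta(x_1,\ldots,x_n)$ in the same $n$ variables as appear in $s_{\lambda/\mu}(x_1,\ldots,x_n)$ (if $\beta$ has more than $n$ parts, then $s_\beta$ in $n$ variables is identically zero, but this case would be ruled out by the construction of $\beta$). Assuming this is built into the statement of Theorem~\ref{thm:mainrestate}, the corollary follows immediately. There is no combinatorial obstacle to overcome here; the real work is in proving Theorem~\ref{thm:mainrestate}.
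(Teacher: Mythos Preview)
Your argument is correct and matches the paper's own reasoning: once Theorem~\ref{thm:mainrestate} gives ${\sf Trop}(s_{\lambda/\mu})={\sf Trop}(s_\beta)$ with $\beta_1=\lambda_1$, the paper likewise concludes Corollary~\ref{thm:main} by simply applying Theorem~\ref{thm:first} (or alternatively Theorem~\ref{thm:standom}) to $s_\beta$. Your remark about the variable count is a reasonable sanity check but, as you surmise, is handled by the construction of $\beta$ in the proof of Theorem~\ref{thm:mainrestate}.
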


In Section~2, we describe a sufficient condition for polynomial complexity. This is explained in terms of \emph{saturated Newton
polytopes} \cite{MTY}.
Section~3 applies this condition to \emph{Stanley symmetric polynomials} \cite{Stanley84}. Since skew-Schur polynomials are a special case of Stanley symmetric
polynomials, we thereby deduce Theorem~\ref{thm:mainrestate}. In Section~4, we remark on how the condition applies to other families of symmetric polynomials. 

\section{Dominance order, Newton polytopes and saturation}

	Let ${\sf Par}(d)=\{\lambda:\lambda\vdash d\}$ be the set of partitions of size $d$. \emph{Dominance order} $\leq_D$ on 
${\sf Par}(d)$ is defined by
\begin{equation}
\label{eqn:domorderineq}
\mu\leq_{D} \lambda \text{\ \ \ if \ \ \ $\sum_{i=1}^k\mu_i\leq \sum_{i=1}^k\lambda_i$ \ \ \ for all $k\geq 1$.}
\end{equation}

\begin{definition}
\label{def:dominated}
Suppose $f\in {\sf Sym}_n$ is homogeneous of degree $d$ such that 
\[f = \sum_{\mu\in {\sf Par}(d)} c_\mu s_\mu\]
with $c_{\mu}\geq 0$ for all $\mu$. Moreover, assume 
there exists $\lambda$ with 
$c_{\lambda}\neq 0$
such that $c_\mu \neq 0$ only if 
$\mu \leq_D \lambda$.  Then we say $f$ is \emph{dominated} by $s_{\lambda}$. \qed
\end{definition}

The \emph{Newton polytope} of 
a polynomial $f$ is 
the convex hull of its exponent vectors, so 
\[{\sf Newton}(f)={\sf conv}((i_1,i_2,\ldots,i_n):c_{i_1,i_2,\ldots,i_n}\neq 0\})\subseteq {\mathbb R}^n.\]
C.~Monical, N.~Tokcan and the second author
\cite{MTY} define $f$ to have  \emph{saturated Newton polytope} (SNP) 
if $c_{i_1,\ldots,i_n}\neq 0 \text{\  whenever $(i_1,\ldots,i_n)\in {\sf Newton}(f)$.}$
 
The \emph{permutahedron} of $\lambda=(\lambda_1,\ldots,\lambda_n)$, denoted ${\mathcal P}_{\lambda}$, is the 
convex hull of the $S_n$-orbit of $\lambda$ in  ${\mathbb R}^n$.
It follows from R.~Rado's theorem \cite{rado} that 
\[{\sf Newton}(s_{\lambda})={\mathcal P}_{\lambda}
\text{ \ \ \ \ and \ \ \ \ 
${\sf Newton}(s_{\mu})\subseteq {\sf Newton}(s_{\lambda})$
 \ if \
$\mu\leq_D \lambda$.}\]

A consequence of R.~Rado's theorem \cite[Proposition~2.5]{MTY} is therefore:

\begin{proposition}
\label{prop:xzy}
If $f\in {\sf Sym}_n$ is dominated by $s_{\lambda}$, then $f$ is SNP, and
\[{\sf Newton}(f)={\sf Newton}(s_{\lambda})={\mathcal P}_{\lambda}.\]
\end{proposition}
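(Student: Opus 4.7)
The plan is to split the desired equality $\mathsf{Newton}(f) = \mathcal{P}_\lambda$ into two inclusions and to deduce SNP simultaneously from the ``lower'' inclusion. The crucial point is that the coefficients of $f$ expanded in the \emph{monomial} basis are sums of nonnegative quantities, so that no cancellation can occur; this turns Newton-polytope computations into set-theoretic ones on supports.

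First I would prove $\mathsf{Newton}(f) \subseteq \mathcal{P}_\lambda$. Writing $f = \sum_{\mu \leq_D \lambda} c_\mu s_\mu$ with $c_\mu \geq 0$, note that every Schur polynomial $s_\mu$ has nonnegative coefficients in the monomial basis, so the coefficient of any monomial $x^\alpha$ in $f$ is a sum of nonnegative terms. Hence $\mathrm{supp}(f) = \bigcup_{c_\mu \neq 0} \mathrm{supp}(s_\mu)$. Taking convex hulls and invoking the statement from Rado's theorem recorded just before the proposition, $\mathsf{Newton}(s_\mu) \subseteq \mathsf{Newton}(s_\lambda) = \mathcal{P}_\lambda$ for every $\mu \leq_D \lambda$, so the union of supports is contained in $\mathcal{P}_\lambda$, and therefore so is its convex hull $\mathsf{Newton}(f)$.

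Next I would prove the reverse inclusion together with SNP. Since $c_\lambda \neq 0$ and there is no cancellation, $\mathrm{supp}(s_\lambda) \subseteq \mathrm{supp}(f)$. By Rado's theorem, the lattice points of $\mathcal{P}_\lambda$ are exactly the weights of semistandard Young tableaux of shape $\lambda$ (equivalently, $s_\lambda$ is SNP with Newton polytope $\mathcal{P}_\lambda$). Consequently every lattice point of $\mathcal{P}_\lambda$ lies in $\mathrm{supp}(f)$, which at once gives $\mathcal{P}_\lambda \cap \mathbb{Z}^n \subseteq \mathrm{supp}(f) \subseteq \mathsf{Newton}(f)$. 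Combined with the first inclusion and the fact that $\mathcal{P}_\lambda$ is a lattice polytope (hence equal to the convex hull of its lattice points), this forces $\mathsf{Newton}(f) = \mathcal{P}_\lambda$ and simultaneously proves that $f$ is SNP.

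There is no real obstacle: the entire argument is a bookkeeping exercise once Rado's theorem is in hand, and the dominance hypothesis is used in exactly one place, namely to fit each $\mathsf{Newton}(s_\mu)$ into $\mathcal{P}_\lambda$ for the upper inclusion. The only subtlety worth flagging is the appeal to nonnegativity of Kostka numbers (implicit in the SNP of $s_\lambda$) to ensure that $\mathrm{supp}(s_\lambda)$ really exhausts $\mathcal{P}_\lambda \cap \mathbb{Z}^n$, and the use of $c_\lambda \neq 0$ together with the absence of sign cancellation to transfer this property to $f$.
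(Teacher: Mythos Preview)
Your argument is correct and matches the paper's approach: the paper states this proposition as an immediate consequence of Rado's theorem (and cites \cite[Proposition~2.5]{MTY}) without writing out any details, and what you have written is precisely the natural unpacking of that citation. Your explicit use of Schur-positivity to rule out cancellation and your two-inclusion split are exactly what is implicit in the paper's one-line justification.
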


We give a technical strengthening of \cite[Theorem~2.5]{Grigoriev}:
\begin{proposition}[Sufficient condition for polynomial complexity]
\label{prop:strengthening}
Suppose $f\in {\sf Sym}_n$ is dominated by $s_{\lambda}(x_1,\ldots,x_n)$. 
Then ${\sf Trop}(f)={\sf Trop}(s_{\lambda})$ over the tropical 
semiring
$({\mathbb R},\oplus,\odot)$. Therefore,
$f$ has at most $O(n^2\cdot \lambda_1)$ tropical semiring complexity.
\end{proposition}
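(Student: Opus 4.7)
The plan is to reduce the statement to an equality of polynomial supports for $f$ and $s_\lambda$, and then quote Proposition~\ref{prop:xzy} together with Theorem~\ref{thm:first}.

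First, I would unpack how the tropical semiring axioms interact with the definition~(\ref{eqn:thetrop}). Under the trivial valuation each nonzero coefficient of $f$ contributes a single tropical monomial $x_1^{\odot i_1}\odot\cdots\odot x_n^{\odot i_n}$ to ${\sf Trop}(f)$, and among the semiring axioms only idempotence $a\oplus a=a$ can identify two such summands, collapsing only exact repeats. Hence ${\sf Trop}(f)={\sf Trop}(g)$ as tropical polynomials precisely when $f$ and $g$ have the same support as ordinary polynomials. Since the $c_\mu$ are nonnegative and each $s_\mu$ has nonnegative monomial coefficients, no cancellation occurs in the monomial expansion of $f=\sum_\mu c_\mu s_\mu$, so ${\rm supp}(f)=\bigcup_{c_\mu\neq 0}{\rm supp}(s_\mu)$.

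Next, Proposition~\ref{prop:xzy} tells us that $f$ is SNP with ${\sf Newton}(f)=\mathcal{P}_\lambda$, so ${\rm supp}(f)=\mathcal{P}_\lambda\cap{\mathbb Z}^n$. The same proposition applied to $s_\lambda$ itself (alternatively, Rado's theorem directly) gives ${\rm supp}(s_\lambda)=\mathcal{P}_\lambda\cap{\mathbb Z}^n$ as well. Combining, ${\rm supp}(f)={\rm supp}(s_\lambda)$, and the support-equality principle of the first paragraph yields ${\sf Trop}(f)={\sf Trop}(s_\lambda)$. The complexity bound is then immediate: Theorem~\ref{thm:first} supplies a circuit of size $O(n^2\cdot\lambda_1)$ evaluating ${\sf Trop}(s_\lambda)$, and by the equality just established the same circuit evaluates ${\sf Trop}(f)$.

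The main obstacle is conceptual rather than technical: one must distinguish tropical polynomial equality in the Grigoriev-Koshevoy sense (equivalence under the tropical semiring axioms) from the weaker equality of the associated piecewise-linear max-plus functions, which records only the vertices of the Newton polytope. Equality of Newton polytopes alone would not suffice here; it is the full SNP conclusion of Proposition~\ref{prop:xzy} that forces equality of the lattice-point supports, and hence of the tropical polynomials themselves.
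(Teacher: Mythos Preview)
Your argument is correct and follows essentially the same route as the paper's proof: invoke Proposition~\ref{prop:xzy} to conclude that $f$ is SNP with ${\sf Newton}(f)={\sf Newton}(s_\lambda)$, deduce that $f$ and $s_\lambda$ have the same monomial support, hence the same tropicalization, and then appeal to Theorem~\ref{thm:first} for the complexity bound. The paper's proof is terser on the first step (it simply asserts that SNP together with ${\sf Newton}(f)={\sf Newton}(s_\lambda)$ ``proves the first statement''), whereas you spell out explicitly why equality of supports is exactly what tropical-polynomial equality under the semiring axioms amounts to; your final paragraph makes this point cleanly. The only additional content in the paper is an optional unwinding of the circuit behind Theorem~\ref{thm:first} via the Minkowski decomposition ${\sf Newton}(s_\lambda)[\mathbb{Z}]=\sum_k {\sf Newton}(e_{\lambda'_k})[\mathbb{Z}]$ and the Pascal recurrence for ${\sf Trop}(e_k)$, which you reasonably omit since you cite Theorem~\ref{thm:first} directly.
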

\begin{proof}
By Proposition~\ref{prop:xzy},
\begin{equation}
\label{eqn:newtonequal}
{\sf Newton}(f)={\sf Newton}(s_{\lambda}),
\end{equation}
and $f$ is SNP. This proves the first statement.

At this point, we can appeal to Theorem~\ref{thm:first} to obtain the
second claim. However, for convenience, we recall the ideas from \cite[Theorem~2.5]{Grigoriev}, thus indicating the underlying circuit. There it is shown
that
\begin{equation}
\label{eqn:theirfact}
{\sf Newton}(s_{\lambda})[{\mathbb Z}]=\sum_{1\leq k\leq \lambda_1} {\sf Newton}(e_{\lambda'_k})[{\mathbb Z}].
\end{equation}
In the Minkowski sum of (\ref{eqn:theirfact}), 
\[e_{k}=\sum_{1\leq j_1<j_2<\ldots <j_k\leq n} x_{j_1}\cdots x_{j_k}\] 
is the \emph{elementary symmetric polynomial} of degree $k$. Also, $\lambda'$ is the conjugate partition of $\lambda$, obtained by
transposing the Young diagram for $\lambda$. Finally, for a 
polytope ${\mathcal P}\subseteq {\mathbb R}^n$, ${\mathcal P}[{\mathbb Z}]$ denotes the set of integer lattice points of ${\mathcal P}$. 

Combining (\ref{eqn:newtonequal}) and (\ref{eqn:theirfact}), we see 
\begin{equation}
\label{eqn:hello999}
{\sf Newton}(f)[{\mathbb Z}]=\sum_{1\leq k\leq \lambda_1} {\sf Newton}(e_{\lambda'_k})[{\mathbb Z}].
\end{equation}
By Proposition~\ref{prop:xzy}, $f$ is SNP. This property of $f$, together with (\ref{eqn:hello999}), implies
\begin{equation}
\label{eqn:sept29abc}
{\sf Trop}(f)=\bigodot_{1\leq k\leq \lambda_1} {\sf Trop}(e_{\lambda_k'}),
\end{equation}
as tropical polynomials.

Therefore, following \emph{loc.~cit.}, to calculate ${\sf Trop}(f)$ it suffices to compute ${\sf Trop}(e_{\lambda_k'})$
for $1\leq k\leq \lambda_1$. The latter has at worst $O(n^2)$ complexity, using the 
(tropicalization) of the
Pascal-type recurrence
\[e_k(x_1,\ldots,x_n)=e_{k}(x_1,\ldots,x_{n-1})+x_n e_{k-1}(x_1,\ldots,x_{n-1}).\]
This proves the second claim. 
\end{proof}

\begin{remark}
The assumption in Definition~\ref{def:dominated} that $f$ be Schur-positive
($c_{\mu}\geq 0$ for each $\mu$) is needed for Proposition~\ref{prop:xzy}. Consider the \emph{monomial symmetric polynomial}
$m_{\lambda}:=\sum_{\theta} x_1^{\theta_1}\cdots x_n^{\theta_n}$,
where the sum is over distinct rearrangements of $\lambda$. It is true that
$m_{\lambda}=\sum_{\mu\leq_D\lambda} I_{\lambda,\mu}
s_{\mu}$,
where $I_{\lambda,\lambda}=1$. Yet, $\{m_{\lambda}\}$ has exponential complexity, by
\cite{Grigoriev}.\qed
\end{remark}

\section{Stanley symmetric polynomials and the Proof of Theorem~\ref{thm:mainrestate}}

For any permutation $w$, R.~P.~Stanley \cite{Stanley84} defined the symmetric power series
\[F_w=\sum_{{\bf a}\in {\sf Red}(w)} \sum_{{\bf b}\in C({\bf a})} x_{{\bf b}}.\]
Here ${\sf Red}(w)$ is the set of reduced words for $w$ in the simple transpositions $s_i=(i \ i+1)$. This means
${\bf a}=(a_1,a_2,\ldots,a_\ell)$, where 
$s_{a_1}s_{a_2}\cdots s_{a_\ell}=w$ 
and $\ell=\ell(w)$ is the number of inversions of $w$.
Now, if ${\bf b}=(b_1,\ldots,b_\ell)$, then ${\bf b}\in C({\bf a})$ if
\begin{itemize}
\item $1\leq b_1\leq b_2\leq \cdots \leq b_{\ell}$; and
\item $a_i<a_{i+1}\implies b_i<b_{i+1}$.
\end{itemize}
Set $x_{\bf b}:=x_{b_1}x_{b_2}\cdots x_{b_{\ell}}$. (This actually defines
$F_{w^{-1}}$ in \cite{Stanley84}. Thus we use the results of \emph{loc.~cit.}
with this swap of convention.)

\begin{remark}
The original motivation for $F_w$ is that
$\#{\sf Red}(w)=[x_1 x_2\cdots x_{\ell}]F_w$. 
If we define $a_{w\lambda}$ as the coefficients in
\begin{equation}
\label{eqn:stanpos}
F_w=\sum_{\lambda} a_{w\lambda}s_{\lambda},
\end{equation}
then 
$a_{w\lambda}\in {\mathbb Z}_{\geq 0}$.
This nonnegativity is proved by work of \cite{EdelmanGreene}
(see also \cite{LS}). In fact, $a_{w\lambda}$ is a generalization of the Littlewood-Richardson coefficient. 
A theorem of H.~Narayanan \cite{Narayanan} states that computation of
$c_{\lambda,\mu}^{\nu}$ is $\#{\sf P}$-complete in L.~Valiant's 
complexity theory for counting problems \cite{Valiant}. 
Hence
$a_{w\lambda}$ is a $\#{\sf P}$-complete counting problem.
In particular, this means that there is no polynomial time algorithm for computing either 
$c_{\lambda,\mu}^{\nu}$ or $a_{w\lambda}$ unless ${\sf P}={\sf NP}$.

Now, 
$[x_1\ldots x_\ell]s_{\lambda}=f^{\lambda}$ counts standard Young tableaux of shape $\lambda$. These numbers are computed by the famous \emph{hook-length formula}. The resulting enumeration
$\#{\sf Red}(w)=\sum_{\lambda} a_{w\lambda} f^\lambda$
establishes that
$\#{\sf Red}(w)$ is a $\#{\sf P}$ counting problem. Is it $\#P$-complete?
\qed
\end{remark}

Recall that the \emph{Rothe diagram}
of $w$ is given by 
\[D(w)=\{(i,j): 1\leq i,j\leq n, j<w(i), i<w^{-1}(j)\}.\]
Pictorially, this is described by
placing $\bullet$'s in positions $(i,w(i))$ (in matrix notation), striking out boxes below and to the right of each $\bullet$. Then $D(w)$ consists of the remaining boxes.

For example, if 
\begin{equation}
\label{eqn:theperm}
w=4 \ 1 \ 5 \ 2 \ 7 \ 3 \ 9 \ 6 \ 10 \ 8\in S_{10}
\text{ \ \ \ \ (in one line notation),}
\end{equation}
then $D(w)$ is depicted by: 
\[
\begin{picture}(273.33,140)
\put(53.33,0){\makebox[0pt][l]{\framebox(133.33,133.33)}}
\put(100,126.67){\circle*{4}}
\thicklines
\put(100,126.67){\line(1,0){86.67}}
\put(100,126.67){\line(0,-1){126.67}}

\put(60,113.33){\circle*{4}}
\thicklines
\put(60,113.33){\line(1,0){126.66}}
\put(60,113.33){\line(0,-1){113.33}}
\put(113.33,100){\circle*{4}}
\put(113.33,100){\line(1,0){73.33}}
\put(113.33,100){\line(0,-1){100}}
\put(73.33,86.67){\circle*{4}}
\put(73.33,86.67){\line(1,0){113.33}}
\put(73.33,86.67){\line(0,-1){86.67}}

\put(140,73.33){\circle*{4}}
\put(140,73.33){\line(1,0){46.67}}
\put(140,73.33){\line(0,-1){73.33}}

\put(86.67,60){\circle*{4}}
\put(86.67,60){\line(1,0){100}}
\put(86.67,60){\line(0,-1){60}}

\put(166.67,46.67){\circle*{4}}
\put(166.67,46.67){\line(1,0){20}}
\put(166.67,46.67){\line(0,-1){46.67}}

\put(126.66,33.33){\circle*{4}}
\put(126.66,33.33){\line(1,0){60}}
\put(126.66,33.33){\line(0,-1){33.33}}

\put(180,20){\circle*{4}}
\put(180,20){\line(1,0){6.67}}
\put(180,20){\line(0,-1){20}}

\put(153.33,6.67){\circle*{4}}
\put(153.33,6.67){\line(1,0){33.33}}
\put(153.33,6.67){\line(0,-1){6.67}}

\thinlines

\put(53.33,120){\makebox[0pt][l]{\framebox(40,13.33)}}
\put(66.67,133.33){\line(0,-1){13.33}}
\put(80,133.33){\line(0,-1){13.33}}

\put(66.67, 93.33){\makebox[0pt][l]{\framebox(26.67,13.33)}}
\put(80,106.67){\line(0,-1){13.33}}

\put(80, 66.67){\makebox[0pt][l]{\framebox(13.33,13.33)}}

\put(120, 66.67){\makebox[0pt][l]{\framebox(13.33,13.33)}}

\put(120, 40){\makebox[0pt][l]{\framebox(13.33,13.33)}}

\put(146.67, 40){\makebox[0pt][l]{\framebox(13.33,13.33)}}

\put(146.67, 13.33){\makebox[0pt][l]{\framebox(13.33,13.33)}}

\end{picture}
\]

For $w\in S_m$, set
$q_i$ to be the number of boxes of $D(w)$ in column $i$ (counting from the left)
for $1\leq i\leq m$.
Then $(q_1,q_2,\ldots,q_m)$ is the \emph{code} of $w^{-1}$.
Let $\beta_{\rm max}(w)$ be the partition obtained by sorting $(q_1,q_2,\ldots,q_m)$ in decreasing order and taking the conjugate shape.

\begin{theorem}[Complexity of tropical Stanley polynomials]
\label{thm:standom}
Let $w\in S_m$. Then the tropical semiring complexity of
${\sf Trop}(F_w(x_1,\ldots,x_n))$ is at most $O(n^2 \cdot \beta_{\rm max}(w)_1)$.
\end{theorem}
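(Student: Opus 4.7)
The plan is to deduce Theorem~\ref{thm:standom} directly from Proposition~\ref{prop:strengthening}, applied with $\lambda=\beta_{\rm max}(w)$. Since $\beta_{\rm max}(w)_1$ is the height of the tallest column of $D(w)$, the complexity bound $O(n^2\cdot\beta_{\rm max}(w)_1)$ supplied by Proposition~\ref{prop:strengthening} matches what Theorem~\ref{thm:standom} asserts. Consequently the task reduces to verifying that $F_w$ is dominated, in the sense of Definition~\ref{def:dominated}, by $s_{\beta_{\rm max}(w)}$.

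By (\ref{eqn:stanpos}) together with the Edelman-Greene theorem, $F_w=\sum_\lambda a_{w\lambda} s_\lambda$ with $a_{w\lambda}\in\mathbb{Z}_{\geq 0}$, so the Schur-positivity hypothesis of Definition~\ref{def:dominated} is already in hand. It remains to establish: (a) $a_{w,\beta_{\rm max}(w)}>0$; and (b) $a_{w\mu}>0 \Rightarrow \mu\leq_D \beta_{\rm max}(w)$. For (a), I would exhibit an explicit reduced word ${\bf a}\in{\sf Red}(w)$---for instance, a column-reading word of $D(w)$, scanning columns left to right and within each column bottom to top---together with a canonical compatible sequence ${\bf b}\in C({\bf a})$ whose multiset of values, sorted decreasingly, equals the conjugate of the sorted code of $w^{-1}$, namely $\beta_{\rm max}(w)$. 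Feeding this pair through Edelman-Greene insertion should produce an insertion tableau of shape $\beta_{\rm max}(w)$, confirming (a).

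For (b), the strategy is to show ${\sf Newton}(F_w)\subseteq\mathcal{P}_{\beta_{\rm max}(w)}$. Every monomial in $F_w$ has total degree $\ell(w)=|D(w)|=|\beta_{\rm max}(w)|$; thus, by Rado's theorem (invoked through Proposition~\ref{prop:xzy}), it suffices to prove that the exponent vector of any term $x^{{\bf b}}$ in the BJS-type sum defining $F_w$ is dominated by $\beta_{\rm max}(w)$. Unpacking the definition of $\beta_{\rm max}(w)$, the inequality to be verified is that for each $k$, the sum of the $k$ largest multiplicities of values appearing in ${\bf b}$ is bounded above by the number of cells of $D(w)$ in its $k$ tallest columns. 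The key combinatorial input is that in any compatible sequence ${\bf b}\in C({\bf a})$, positions where ${\bf b}$ is constant correspond to a strictly increasing run in ${\bf a}$; bundling $k$ such runs and appealing to standard facts about reduced words and the column structure of $D(w)$ should deliver the required bound.

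The main obstacle will be the combinatorial bookkeeping in step (b): tying multiplicities in compatible sequences precisely to column-height statistics of $D(w)$ requires care, and a naive union bound over increasing runs in ${\bf a}$ is insufficient. Once (a) and (b) are both in hand, Proposition~\ref{prop:strengthening} immediately yields ${\sf Trop}(F_w)={\sf Trop}(s_{\beta_{\rm max}(w)})$ over $(\mathbb{R},\oplus,\odot)$, and therefore the claimed $O(n^2\cdot\beta_{\rm max}(w)_1)$ complexity bound.
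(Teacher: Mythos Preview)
Your reduction to Proposition~\ref{prop:strengthening} via Definition~\ref{def:dominated} is exactly the paper's argument. The difference lies in how facts (a) and (b) are obtained: the paper simply cites \cite[Theorem~4.1]{Stanley84}, which already records both that $a_{w\mu}\neq 0\Rightarrow\mu\leq_D\beta_{\rm max}(w)$ and that $a_{w,\beta_{\rm max}(w)}\neq 0$. You instead sketch a direct proof of these two facts, which amounts to reproving Stanley's theorem from scratch.

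Two small corrections to your sketch. First, under the paper's convention ($a_i<a_{i+1}\Rightarrow b_i<b_{i+1}$ with ${\bf b}$ weakly increasing), a constant block in ${\bf b}$ forces a strictly \emph{decreasing} run in ${\bf a}$, not an increasing one. Second, $\beta_{\rm max}(w)_1$ equals the number of nonempty columns of $D(w)$, not the height of the tallest column (this slip is harmless for the argument, since the bound is stated in terms of $\beta_{\rm max}(w)_1$ regardless). More substantively, the step you correctly flag as the main obstacle---bounding the sum of the $k$ largest multiplicities in ${\bf b}$ by the sum of the $k$ tallest column heights of $D(w)$---is precisely the content of Stanley's dominance theorem, and your outline does not yet contain the mechanism that makes it go through (Stanley's argument and the subsequent Edelman--Greene theory proceed via insertion and Greene-invariant reasoning, not a direct counting bound on runs). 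So the citation to \cite{Stanley84} is not merely a shortcut; it carries the real combinatorial weight that your sketch leaves open.
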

\begin{proof}
By \cite[Theorem~4.1]{Stanley84} (up to convention), 
if $a_{w,\lambda}\neq 0$, then 
\[\lambda\leq_D \beta_{\rm max}(w).\]
Since the $a_{w\lambda}$ in (\ref{eqn:stanpos}) are positive,
if $F_w(x_1,\ldots,x_n)$ is nonzero, then $a_{w,\beta_{\rm max}(w)}\neq 0$ and $F_w$ is dominated by $s_{\beta_{\rm max}(w)}$. 
Now use
Proposition~\ref{prop:strengthening}.
\end{proof}

\noindent\emph{Proof of Theorem~\ref{thm:mainrestate} (and Corollary~\ref{thm:main}):} We show that $s_{\lambda/\mu}(x_1,\ldots,x_n)$ is dominated by
$s_{\beta}(x_1,\ldots,x_n)$ for some shape $\beta$ (to be determined) with $\beta_1=\lambda_1$.

Given $\lambda/\mu$, construct a permutation $w_{\lambda/\mu}$ by filling all boxes in the same northwest-southeast diagonal with the same entry, starting with $1$ on the northeastmost diagonal and increasing consecutively as one moves southwest.
Call this filling $T_{\lambda/\mu}$.

For instance, if $\lambda/\mu=(5,4,3,2,1)/(2,2,1,0,0)$ then
\[T_{\lambda/\mu}=\tableau{&& 3 & 2 & 1\\ && 4 & 3\\
& 6 & 5\\ 8 & 7 \\ 9}\]

Let $(r_1,r_2,\ldots,r_{|\lambda/\mu|})$ be the left-to-right, top-to-bottom, row reading word of $T_{\lambda/\mu}$. In our example, this is $(3,2,1,4,3,6,5,8,7,9)$.

Define 
$w_{\lambda/\mu}=s_{r_1}s_{r_2}\cdots s_{r_{|\lambda/\mu|}}$. 
By \cite[Corollary~2.4]{BJS},
\[F_{w_{\lambda/\mu}}(x_1,\ldots,x_n)=s_{\lambda/\mu}(x_1,\ldots,x_n)\in {\sf Sym}_n.\]

By \cite[Section~2]{BJS}, $\lambda/\mu$ is obtained by removing empty rows and columns
of $D(w_{\lambda/\mu})$ and reflecting across a vertical line. In our example, $w_{\lambda/\mu}$ is the permutation
(\ref{eqn:theperm}). The reader can check from the Rothe diagram that this process gives $\lambda/\mu$.

By definition, $\beta_{\rm max}(w_{\lambda/\mu})$ is the conjugate of the decreasing rearrangement of the code of $w_{\lambda/\mu}^{-1}$. 
Hence, in our example, the code of $w_{\lambda/\mu}^{-1}$ is $(1,2,3,0,0,2,0,2,0,0)$, which rearranges to $(3,2,2,2,1,0,0,0,0,0)$. Therefore,
\[\beta_{\rm max}(w_{\lambda/\mu})=\tableau{\ & \ & \ \\ \ & \ \\ \ & \ \\ \ & \ \\ \ }^{\ '}=
\tableau{\ & \ & \ & \ & \ \\ \ & \ & \ & \ \\ \ }.\]
(Thus, $\beta_{\rm max}(w_{\lambda/\mu})$ is obtained from $\lambda/\mu$ by first pushing the boxes in each column north and left-justifying the result.)

Since the coefficients $c_{\mu,\nu}^\lambda$ in the Schur expansion (\ref{eqn:skewschurlr}) are positive, 
$s_{\lambda/\mu}(x_1,\ldots,x_n)$ is dominated by
$s_{\beta}$, where $\beta=\beta_{\rm max}(w_{\lambda/\mu})$. By the above process from~\cite{BJS} relating $D(w_{\lambda/\mu})$ and $\lambda/\mu$,
\[\beta_1=\beta_{\rm max}(w_{\lambda/\mu})_1=\lambda_1,\]
as desired. Theorem~\ref{thm:mainrestate} now holds by
the first conclusion of Proposition~\ref{prop:strengthening}. 

To conclude Corollary~\ref{thm:main}, we may now either apply 
Theorem~\ref{thm:first} or Theorem~\ref{thm:standom}.\qed

\section{Some other symmetric polynomials}

In \cite[Sections~2 and~3]{MTY}, some symmetric polynomials are 
observed to be SNP because they are 
dominated by $s_{\lambda}$ (for some $\lambda$), or for other reasons. These include:
\begin{itemize}
\item[(1)] J.~R.~Stembridge's polynomial $F_M$ for a totally nonnegative matrix $M$ \cite{Stembridge};
\item[(2)] the cycle index polynomial $c_G$ of  a subgroup $G\leqslant S_n$ (from Redfield-P\'olya theory);
\item[(3)] C.~Reutenauer's $q_{\lambda}$ basis of ${\sf Sym}_n$ \cite{Reutenauer};
\item[(4)] the symmetric Macdonald polynomial (where $(q,t)\in {\mathbb C}^2$ is generic);
\item[(5)] the Hall-Littlewood polynomial (for any positive evaluation of $t$); and
\item[(6)] Schur $P-$ and Schur $Q-$ polynomials.
\end{itemize}

Consequently, by Proposition~\ref{prop:strengthening}, the tropicalizations of these polynomials equal some tropical Schur polynomial. Therefore, as with skew Schur polynomials, one obtains immediate tropical semiring complexity implications:
\begin{itemize}
\item The polynomials (1) and (2) are dominated by $s_{(n)}$;
  see~\cite[Theorem~2.28]{MTY} and~\cite[Theorem~2.30]{MTY}. Hence
  Proposition~\ref{prop:strengthening} shows their tropicalizations
  have $O(n^3)$ complexity.  In fact, since ${\sf Trop}(s_{(n)})={\sf
    Trop}((x_1+\cdots+x_n)^n)$ as tropical polynomials, they have $O(n)$ complexity (see~\cite[Theorem~1.4]{Fomin} for a nontropical version of this
statement).

\item For (3), if $\lambda=(\lambda_1,\ldots,\lambda_{\ell},1^r)$
where each $\lambda_i\geq 2$, then $q_{\lambda}$ is dominated
by $s_{a,b}$ where $a=|\lambda|-\ell$ and $b=\ell$ (see \cite[Theorem~2.3.2]{MTY} and specifically its proof). Hence
Proposition~\ref{prop:strengthening} asserts ${\sf Trop}(q_{\lambda})$ has $O(n^2\cdot (|\lambda|-\ell))$ complexity.

\item For a generic choice of $q,t\in {\mathbb C}^2$, it follows from \cite[Section~3.1]{MTY} that if $P_{\lambda}(X;q,t)\in {\sf Sym}_n$ is the Macdonald polynomial, then
${\sf Trop}(P_{\lambda}(X;q,t))={\sf Trop}(s_{\lambda})$.
Hence ${\sf Trop}(P_{\lambda}(X;q,t))$ has $O(n^2\cdot\lambda_1)$ complexity.
\item (5) and (6) are also indexed by
partitions $\lambda$ and dominated by $s_{\lambda}$. Thus,
Proposition~\ref{prop:strengthening} implies their tropicalizations have $O(n^2\cdot \lambda_1)$ complexity.
\end{itemize}

\section*{Acknowledgements}
AY was supported by an NSF grant. We thank Marc Snir for helpful communications.


\begin{thebibliography}{99999999999}
\bibitem[BiJoSt93]{BJS}
S.~Billey, W.~Jockusch and
R.~P.~Stanley, \emph{Some combinatorial properties of Schubert polynomials}, 
J.~Algebraic Combin.~{\bf 2}(1993), no. 4, 345--374.
\bibitem[EdGr87]{EdelmanGreene} P.~Edelman and C.~Greene, \emph{Balanced tableaux}, 
Adv.~in Math. 63 (1987), no. 1, 42--99. 
\bibitem[FoGrNoSc16]{Fomin} S.~Fomin, D.~Grigoriev, D.~Nogneng and
E.~Schost, \emph{On semiring complexity of Schur polynomials}, preprint, 2016.
\textsf{arXiv:1608.05043}
\bibitem[Fu97]{Fulton}
W.~Fulton, Young tableaux. With applications to representation theory and geometry. London Mathematical Society Student Texts, 35. Cambridge University Press, Cambridge, 1997.
\bibitem[GrKo16]{Grigoriev} D.~Grigoriev and G.~Koshevoy,
\emph{Complexity of tropical Schur polynomials},
Journal of Symbolic Computation,
Volume 74, May--June 2016, Pages 46--54.
\bibitem[ItMiSh09]{Mikhal}
I.~Itenberg, G.~Mikhalkin, and E.~Shustin, Tropical algebraic geometry.
Second edition. Oberwolfach Seminars, 35. Birkhäuser Verlag, Basel, 2009. x+104 pp.  
\bibitem[JeSn82]{Jerrum} M.~Jerrum and M.~Snir, \emph{Some exact complexity
results for straight-line computations over semirings}, J.~Association for
Computing Machinery, Vol 29, No.~3, July 1982, 874--897.
\bibitem[LaSc82]{LS} A.~Lascoux and M.~-P.~Sch\"utzenberger,
\emph{Structure de Hopf de l'anneau de cohomologie et de l'anneau de 
Grothendieck d'une vari\'et\'e de drapeaux}, C.~R.~Acad.~Sci.~Paris S\'er.~I
Math.~{\bf 295} (1982), 629--633.
\bibitem[MaSt15]{MacSturm} D.~Maclagan and B.~Sturmfels, 
Introduction to tropical geometry.
Graduate Studies in Mathematics, 161. American Mathematical Society, Providence, RI, 2015. xii+363 pp.
\bibitem[MoToYo17]{MTY} C.~Monical, N.~Tokcan and A.~Yong.
\newblock Newton polytopes in algebraic combinatorics.
\newblock \emph{Preprint}, \textsf{arXiv:1703.02583}
\bibitem[Na06]{Narayanan} H. Narayanan, \emph{On the complexity of computing Kostka numbers and Littlewood-Richardson coefficients}, J.~Alg.~Comb., Vol. 24, N. 3, 2006, 347--354.
\bibitem[Ra52]{rado} R.~Rado.
\newblock An inequality.
\newblock {\em J. London Math. Soc.}, 27:1--6, 1952.
\bibitem[Re95]{Reutenauer} C.~Reutenauer,
\emph{On symmetric functions, related Witt vectors, and the free Lie algebra}, 
Adv.~Math. {\bf 110}(1995), 234--246.
\bibitem[St84]{Stanley84} R.~P.~Stanley, \emph{On the number of reduced decompositions of elements of Coxeter groups}, European J. Combin. 5 (1984), no. 4, 359--372.
\bibitem[St99]{ECII} \bysame, Enumerative combinatorics. Vol. 2. With a foreword by Gian-Carlo Rota and appendix 1 by Sergey Fomin. Cambridge Studies in Advanced Mathematics, 62. Cambridge University Press, Cambridge, 1999. 
\bibitem[St91]{Stembridge} J.~R.~Stembridge,
\emph{Immanants of totally positive matrices are nonnegative}, 
Bulletin of the London Mathematical Society, 23 (1991), no. 5, 422--428.
\bibitem[Va79]{Valiant}  L.~G.~Valiant, \emph{The complexity of computing the permanent}, Theoret.~Comput.~Sci., 8(2):189--201, 1979.
\end{thebibliography}
\end{document}